\theoremstyle{plain}
\newtheorem{theorem}{\bf Theorem}[section]
\newtheorem{proposition}[theorem]{\bf Proposition}
\newtheorem{conjecture}[theorem]{\bf Conjecture}
  \def\cC{{\mathcal C}}       
\def\d{\delta}  \def\cE{{\mathcal E}}       
\def\D{\Delta}         
   \def\cH{{\mathcal H}}       
\def\e{\eta}           
\def\l{\lambda} \def\cM{{\mathcal M}}       
  \def\cX{{\mathcal X}}       
\def\Z{{\mathbb Z}}    
\def\Q{{\mathbb Q}} 
\newcommand{\mg}{\mathcal{M}_g}
\newcommand{\Mg}{\overline{\mathcal{M}}_g}
\newcommand{\stMg}{\overline{M}_g}
\newcommand{\mgn}{\mathcal{M}_{g,n}}
\newcommand{\Mgn}{\overline{\mathcal{M}}_{g,n}}
\newcommand{\stMgn}{\overline{M}_{g,n}}
\newcommand{\stMgnm}{\overline{M}_{g,n-1}}
\newcommand{\mrat}{\mathcal{M}_{0,2g+2}/S_{2g+2}}
\newcommand{\Mrat}{\widetilde{\mathcal{M}}_{0,2g+2}}
\newcommand{\M}{\overline{\mathcal{M}}}
\newcommand{\Mgg}{\overline{\mathcal{M}}_{g,g}}
\newcommand{\hg}{\mathcal{H}_g}
\newcommand{\Hg}{\overline{\mathcal{H}}_g}
\newcommand{\stHg}{\overline{H}_g}
\newcommand{\hgn}{\mathcal{H}_{g,n}}
\newcommand{\Hgn}{\overline{\mathcal{H}}_{g,n}}
\newcommand{\stHgn}{\overline{H}_{g,n}}
\newcommand{\stHgnm}{\overline{H}_{g,n-1}}
\newcommand{\dmg}{\partial\mathcal{M}_{g}}
\newcommand{\dhg}{\partial\mathcal{H}_{g}}
\newcommand{\dirr}{\d_{\mathrm{irr}}} 
\newcommand{\Dirr}{\D_{\mathrm{irr}}} 
\def\i{\iota}
\def\Pic{\mathop{\mathrm{Pic }}}
\def\Cl{\mathop{\mathrm{Cl }}}
\def\qq{\quad}
\let\geq\geqslant
\let\leq\leqslant
\def\eq{\begin{equation}}
\def\qe{\end{equation}}
\begin{document}

\title[The moduli space of hyperelliptic  curves with  marked points]{On the Kodaira dimension of the moduli space of hyperelliptic  curves with  marked points}

\author{Irene Schwarz}
\address{Humboldt Universit\"at Berlin, Institut f\"ur Mathematik, 
Rudower Chausee 25, 12489 Berlin, Germany}
\email{schwarzi@math.hu-berlin.de}
\begin{abstract}
It is known  that the moduli space $\Hgn$ of genus $g$ stable hyperelliptic curves with $n$ marked points is uniruled for   $n \leq 4g+5$.  In this paper we consider the complementary case. We calculate the canonical divisor of $\Hgn$ and show that it is effective for $n=4g+6$ and big for $n\leq 4g+7$. This leads us to conjecture that $\Hgn$ has non-negative Kodaira dimension for $n = 4g+6$ and is of general type for $n \geq 4g+7$. 
\ \\
A shortened version of this paper appears as an appendix in \cite{bm}, where this conjecture is proven.
\end{abstract}

\thanks{This paper is part of my PhD Thesis written under the supervision of Prof. G. Farkas at Humboldt Universit\"at Berlin, Institut f\"ur Mathematik.  I wish to thank my advisor for suggesting this topic and for his efficient support, in particular for very helpful discussions on the relation between stack and coarse moduli space.
I especially acknowledge fruitful discussions with Ignacio Barros. From him I learned  that the stack  $\stHgn$  is actually not smooth, with reference to the literature. This killed my original approach to calculate the  the Kodaira dimension of $\Hgn$ by controlling its singularities.  But it has also lead to this corrected version of my work.}


\maketitle

\setcounter{page}{1}

\section{Introduction} \label{section introduction}
 The birational geometry of the moduli spaces $\mg$ and $\mgn$ of genus $g$ curves 
and of genus $g$ curves with $n$ marked points has been studied for a long time, together with their Deligne-Mumford compactifications $\Mg$ and $\Mgn$.  A prominent question is when these spaces are unirational, i.e. explicitly describable (at least generically) by finitely many complex parameters, or on the contrary of general type, i.e. of maximal Kodaira dimension. Such investigations go back at least to the fundamental papers \cite{hm} (for $\Mg$) and \cite{l} (for $\Mgn$), establishing that these spaces are of general type if $g$ is sufficiently large or $n$ is sufficiently large, depending on $g$. Subsequently, the results of these papers have been refined by various authors, see e.g.\cite{f,f2,fv4}.
On the other side, there is a finite number of values of $g,n$ for which the moduli space $\Mgn$ is uniruled or even unirational.
 
It is of great geometric interest to understand in a similar way the birational geometry of subvarieties of $\Mg$ and $\Mgn$. Here, the most classical example that comes to mind is, probably, the locus $\hg$ (and $\hgn$) in $\mg$ (or $\mgn$) of hyperelliptic genus $g$ curves (with $n$ marked points). We refer to \cite{acgh, acg} for background on hyperelliptic curves and their moduli space $\hg$ (as well as the associated moduli stack).
 
 
 Clearly, $\hg$ always is unirational, being explicitly parametrized by 
equations $y^2= f(x)$, where $f$ is a polynomial of degree $2g+1$ or $2g+2$ with simple zeroes. By adding marked points, this becomes less explicit. We recall, however,  that it is proved in  \cite{be}  that $\Hgn$ is uniruled for all $n \leq 4g+4$, by applying the methods of that paper  also to the subvarieties $\Hgn$ of $\Mgn$. This has been extended to the case $4g+5$ in \cite{ab}.  In the present paper we shall study the complementary case. Our main result is
 
\begin{theorem}    \label{t1}
The canonical divisor of the moduli space $\Hgn$ is effective for $n=4g+6$ and big for $n\geq 4g+7$.
\end{theorem}

We recall that for $\Mgn$ it was proven in \cite{hm}, \cite{l} that the singularities of the space $\Mgn$ do not impose adjunction conditions. Thus the Kodaira-Iitaka dimension of the canonical divisor is well defined and equal to the Kodaira dimension of the space $\Mgn$.  If one could prove an analogous result for $\Hgn$,  Theorem \ref{t1} would give a result on its  Kodaira dimension. Even without controlling the singularities, it seems reasonable to conjecture

\begin{conjecture} \label{conjecture}
The moduli space $\Hgn$ is of general type for $n\geq 4g+7$ and has non-negative Kodaira dimension for $n=4g+6$.
\end{conjecture}
 
In fact, in an earlier version of this paper we have attempted to control the singularities of $\Hgn$ under the assumption that the corresponding stack $\stHgn$ is smooth. Unfortunately, it turns out that $\stHgn$ is, in fact, not smooth and $\Hgn$ no longer has finite-quotient singularities (see \cite{bm}). 
Therefore we can not use the Raid-Tai criterion to control the singularities and can not identify the Kodaira dimension of $\Hgn$ with the Kodaira-Iitaka dimension of its canonical divisor. 

Nonetheless, using a different approach it was proven by Barros and Mullane in \cite{bm} that Conjecture \ref{conjecture} is actually true and that case $n=4g+6$ does indeed give us an intermediary Kodaira dimension. A shortened version of the present paper appears as an appendix to \cite{bm}.

 \ \\
The main steps in our proof are to calculate the class of the canonical divisor in the rational class group $\rm{CH}^1\left(\overline{\mathcal{H}}_{g,n}\right)\otimes\Q$. Using divisors from \cite{l} and the positivity of the sum of $\psi$-classes we decompose this divisor class into the sum of effective and big divisors.
In Section \ref{section Hgn} we shall prove

\begin{theorem}\label{canonical Hgn}
The canonical class of the stack $\stHgn$ is
\begin{equation}\label{canonical Hgn eq stack}
\begin{split}
K_{\stHgn} &= \sum_{i=1}^n \psi_i -(\frac{1}{2}+\frac{1}{2g+1})\e_0+ 
\sum_S \sum_{i=1}^{\lfloor \frac{g-1}{2}\rfloor} (\frac{2(2i+2)(g-i)}{2g+1}-2)\e_{i,S} \\ &+\sum_S\sum_{i=1}^{\lfloor \frac{g}{2}\rfloor} (\frac{2(2i+1)(2g-2i+1)}{2g+1}-3) \d_{i,S}
-2\sum_{|S|\geq 2} \d_{0,S},
\end{split}
\end{equation}
and for $n\geq 2$ the canonical class of the coarse moduli space $\Hgn$ is
\begin{equation}\label{canonical Hgn eq coarse}
\begin{split}
K_{\Hgn} &= \sum_{i=1}^n \psi_i -(\frac{1}{2}+\frac{1}{2g+1})\e_0+ 
\sum_S \sum_{i=1}^{\lfloor \frac{g-1}{2}\rfloor} (\frac{2(2i+2)(g-i)}{2g+1}-2)\e_{i,S} \\ &+\sum_S\sum_{i=1}^{\lfloor \frac{g}{2}\rfloor} (\frac{2(2i+1)(2g-2i+1)}{2g+1}-3) \d_{i,S}
-2\sum_{|S|\geq 2} \d_{0,S}-\sum_{i=1}^g \d_{i,\emptyset} ,
\end{split}
\end{equation}
where the sum is taken over all subsets $S\subset \{1,\ldots,n\}$
and we use $\d_{i;\emptyset}=\d_{g-i, \{1,\ldots,n\}}$.
\end{theorem}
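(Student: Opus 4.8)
The plan is to compute $K_{\stHgn}$ intrinsically by deformation theory and then descend to the coarse space. The cotangent space to $\stHgn$ at a pointed curve $(C,p_1,\dots,p_n)$ is $H^0(C,\omega_C^{\otimes2}(\textstyle\sum_i p_i))$, so if $\pi\colon\mathcal C\to\stHgn$ is the universal curve with sections $\sigma_1,\dots,\sigma_n$ then $K_{\stHgn}=c_1\bigl(\pi_*(\omega_\pi^{\otimes2}(\sum_i\sigma_i))\bigr)$. Grothendieck--Riemann--Roch, together with Mumford's relation $\kappa_1=12\lambda-\delta$ and the contribution of the sections, yields an expression of the shape $K_{\stHgn}=13\lambda+\sum_i\psi_i-\sum_\Delta a_\Delta\Delta$, where $\Delta$ runs over the boundary divisors $\e_0,\e_{i,S},\d_{i,S},\d_{0,S}$ and $a_\Delta$ is (twice) the number of nodes of a generic fibre over $\Delta$. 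Here the admissible-cover description via the genus-zero model $\Mrat$ is essential: a generic point of $\e_{i,S}$ is a double cover whose base $\P^1$ has broken into two pieces, so the curve carries a conjugate \emph{pair} of nodes and $\e_{i,S}$ enters with node-multiplicity two, whereas $\e_0$, $\d_{i,S}$ and $\d_{0,S}$ each contribute a single node.

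To eliminate $\lambda$ I would invoke Cornalba's relation on the hyperelliptic locus, which expresses $\lambda$ as an explicit rational combination of the boundary classes with denominator dividing $8g+4$; this is exactly what produces the denominators $2g+1$ and $4g+2$ in \eqref{canonical Hgn eq stack}. Substituting it into the formula above and collecting terms over each boundary divisor should reproduce the coefficients displayed in \eqref{canonical Hgn eq stack}. To keep the dependence on the marked points transparent I would organise the computation along the forgetful tower $\stHgn\to\stHgnm\to\dots\to\stHg$: for each forgetful map $\rho$ the relative canonical formula $K_{\stHgn}=\rho^*K_{\stHgnm}+\omega_\rho$ together with the comparison $\omega_\rho=\psi_n-\sum_{j<n}\d_{0,\{j,n\}}$ isolates the $\psi_i$- and $\d_{0,S}$-contributions and reduces the Hodge and higher-boundary part to the unpointed base case $K_{\stHg}$.

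To descend to the coarse space, observe that for $n\ge2$ a generic $n$-pointed hyperelliptic curve has no automorphisms, so $\stHgn$ and $\Hgn$ coincide away from the boundary; the discrepancy is concentrated on the boundary divisors with a generic nontrivial stabiliser. These are precisely the $\d_{i,\emptyset}$ for $1\le i\le g$, where a genus-$i$ \emph{unmarked} component is attached at a Weierstrass point to the component carrying all the marks: the hyperelliptic involution of the unmarked tail fixes that node and extends, by the identity on the marked side, to an involution of the whole pointed curve. On $\e_{i,\emptyset}$ the two components meet along a conjugate pair which the involution interchanges, and on $\d_{i,S}$ with $S\ne\emptyset$ the marks rigidify the tail, so in neither case does such an automorphism survive; this is why only the $\d_{i,\emptyset}$ appear in \eqref{canonical Hgn eq coarse}. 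Because the involution acts by $-1$ on the smoothing parameter of the Weierstrass node, the coarse map $p\colon\stHgn\to\Hgn$ is ramified to order two along each $\d_{i,\emptyset}$, giving $K_{\stHgn}=p^*K_{\Hgn}+\sum_{i=1}^g\d_{i,\emptyset}$; this is \eqref{canonical Hgn eq coarse}, and is the exact analogue of the elliptic-tail correction on $\Mg$.

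The main obstacle is the boundary bookkeeping behind the first two steps: determining the node-multiplicities of the $\e$-type divisors, fixing the precise shape of Cornalba's relation in the pointed hyperelliptic setting, and checking that the two families of fractional contributions combine to the stated coefficients. I expect the $\e$-versus-$\d$ dichotomy --- a conjugate pair of nodes as opposed to a single Weierstrass node --- to be the most delicate point, and the verification that everything collapses to \eqref{canonical Hgn eq stack} to carry the real content of the proof.
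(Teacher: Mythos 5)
Your second and third steps (the forgetful tower with $K_{\stHgn}=\rho^*K_{\stHgnm}+\omega_\rho$, and the descent to the coarse space via the ramification divisor $\sum_{i=1}^g\d_{i,\emptyset}$, with exactly the right analysis of why $\e_{i,\emptyset}$ and $\d_{i,S}$, $S\neq\emptyset$, do not contribute) coincide with what the paper does. The gap is in your first step, and it is a genuine one: the cotangent space of $\stHgn$ at $(C,p_1,\dots,p_n)$ is \emph{not} $H^0(C,\omega_C^{\otimes2}(\sum_i p_i))$ --- that is the cotangent space of $\stMgn$, of dimension $3g-3+n$, whereas $\dim\stHgn=2g-1+n$. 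The correct cotangent space is built from the $\g$-invariant part $H^0(C,\omega_C^{\otimes2})^{\g}\simeq H^0(C,\omega_C^{\otimes2}(-R))$ (cf.\ the basis \eqref{basis} in Section 4) together with the marked-point directions. Consequently $c_1\bigl(\pi_*(\omega_\pi^{\otimes2}(\sum_i\sigma_i))\bigr)$ is the first Chern class of a bundle of the wrong rank and is not $K_{\stHgn}$; equivalently, the ``$13\lambda+\psi-\sum a_\Delta\Delta$'' ansatz is the restriction of $K_{\stMgn}$ and differs from $K_{\stHgn}$ by the (nontrivial) normal bundle of the hyperelliptic locus. One can see the discrepancy numerically: with your recipe the coefficient of $\e_0$ (already for $n=0$) would be $\frac{13g}{8g+4}-2=\frac{-3g-8}{8g+4}$, while the correct value is $-(\frac12+\frac1{2g+1})=\frac{-4g-6}{8g+4}$; these agree only for $g=2$.

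The paper sidesteps this entirely: it never runs Grothendieck--Riemann--Roch on the hyperelliptic stack. Instead it computes the base case $K_{\Hg}$ by transporting Keel--McKernan's formula for $K_{\Mrat/S_{2g+2}}$ through the isomorphism $\Hg\simeq\Mrat/S_{2g+2}$ (Proposition \ref{isomorphism}, which is where the intersection multiplicities $\frac12$ and $2$ for the two families of boundary divisors are fixed), then corrects by the ramification divisor $\sum_i\d_i$ to pass to $\stHg$, and only afterwards climbs the forgetful tower. If you want to salvage your route, you would have to apply an equivariant Riemann--Roch argument to the $\g$-invariant subbundle (or, equivalently, push everything down to the genus-zero quotient), at which point you are essentially redoing the paper's computation on $\Mrat$; the relation \eqref{l Hg} alone cannot repair the wrong starting formula, since on $\Hg$ the class $\lambda$ is itself a boundary combination and the error is already present in the boundary coefficients.
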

 
Here $\psi_i$ denote the point bundles (or tautological classes) on $\Hgn$ and $\e_0, \e_{i,S}, \d_{i,S}, \d_{0,S}$ are the boundary divisors. All these divisors are introduced in Section \ref{section preliminaries} or \ref{section Hgn}.

 
 The outline of the paper is as follows. In Section \ref{section preliminaries} we collect notation and preliminaries on $\Hg$ and $\Mgn$. In Section \ref{section Hgn} we calculate the canonical divisor of $\Hgn$. In Section \ref{section eff} we introduce effective divisors on $\Hgn$, following \cite{l}. Finally, in Section \ref{section proof} we prove \ref{t1} by decomposing the canonical divisor $K_{\Hgn}$.
 
\section{Preliminaries}\label{section preliminaries}

In this section we want to recall some well known facts about the hyperelliptic locus $\hg \subset \mg$, its compactification $\Hg\subset \Mg$ and its rational divisor class group. We will use the isomorphism of coarse moduli spaces $\Hg\simeq \Mrat:=\M_{0,2g+2}/S_{2g+2}$ (see \cite{al}) to compute its canonical divisor.

We begin by recalling some basic facts about hyperelliptic curves.
A hyperelliptic curve of genus $g$ is a smooth curve of genus $g$ admitting a degree 2 morphism to $\mathbb{P}^1$ which by the Hurwitz formula will be ramified in exactly $2g+2$ points. The map induces an involution called the hyperelliptic involution. The $2g+2$ ramification points, i.e. the fixed points of the hyperelliptic involution, are called Weierstra\ss \ points. A stable hyperelliptic curve is a stable curve admitting a degree 2 morphism to a stable rational curve. The induced involution is also called hyperelliptic involution. In both cases the degree 2 morphism
is unique and the (stable) hyperelliptic curve can be recovered from its $2g+2$ branch points.

We define $\hg\subset \mg$ as the locus of all (classes of) smooth hyperelliptic curves of genus $g$. We define $\Hg$ as the closure of $\hg$ in $\Mg$ which turns out to be the locus of stable hyperelliptic curves. 

Before we study the boundary of $\Hg$ we will recall the boundary and tautological classes on $\Mgn$.
For results on $\Mgn$ we refer to the book \cite{acg}. 
We emphasize that \cite{acg} mainly works on the moduli stacks. However, all the basic divisors which we shall soon introduce exist both on the stack and its associated coarse moduli space. When it becomes necessary we shall always indicate in notation where we are working.
 All divisor class groups are taken with rational coefficients and, in particular, we identify the the divisor class group on the moduli stack with that of the corresponding coarse moduli space.
We caution the reader that by a standard abuse of notation we will consistently use the same symbol for classes on different moduli spaces. \\

In order to describe the relevant boundary divisors on $\Mgn$, we recall that $\Delta_0$ (sometimes also called $\Delta_{\mathrm{irr}}$) on 
$\Mg$ is the boundary component consisting of all (classes of) stable curves of arithmetical genus $g$, having at least one nodal point with the property that a partial normalisation of the curve at this node preserves connectedness. Furthermore, $\Delta_i$,  for $1 \leq i \leq \lfloor\frac{g}{2}\rfloor,$ denotes the boundary component of curves possessing a node of order $i$ or of type $\d_i$ (i.e. a partial normalisation at this node decomposes the curve in two connected components of arithmetical genus $i$ and $g-i$ respectively). Similarly, on $\Mgn$, we denote by $\Dirr$ the pull-back of $\D_0$ and, for any subset $S \subset \{1, \ldots,n\}$, we denote by $\Delta_{i,S}, 0 \leq i \leq \lfloor\frac{g}{2}\rfloor,$ the boundary component consisting of curves possessing a node of order $i$ such that, after taking a partial normalisation, the connected component of genus $i$ contains precisely the marked points labelled by $S$. Note that, if $S$ contains at most 1 point, one has $\Delta_{0,S}= \emptyset$ (the existence of infinitely many automorphisms on the projective line technically violates stability). Thus, in that case, we shall henceforth consider $\Delta_{0,S}$ as the zero divisor.

We shall denote by $\delta_i, \delta_{i,S}, \dirr$ the rational divisor classes of $\Delta_i, \Delta_{i,S}, \Dirr$ in $\Pic \mg$ and $\Pic \Mgn$, respectively. Note that $\delta_0$ is also called $\delta_{\mathrm{irr}}$ in the literature, but we shall reserve the notation $\delta_{\mathrm{irr}}$ for the pull-back of $\d_0$ under the forgetful map
$\pi:\Mgn \to \Mg$.

We write $\d$ for the sum of all boundary divisors and set $\d_{i,s}=\sum_{|S|=s}\d_{i,S}$. \\

Finally we recall the notion of the point bundles $\psi_i, 1 \leq i \leq n,$ on $\Mgn$. Informally, the line bundle $\psi_i$ (sometimes called the cotangent class corresponding to the label $i$) is given by choosing as fibre of $\psi_i$ over a point $[C;x_1, \ldots, x_n]$ of $ \mgn$ the cotangent line $T_{x_i}^v(C)$. \\


Now let us return to the moduli space $\Hg$ and recall some well known facts (see \cite[Chapter XIII §8]{acg}). 

The locus $\hg\subset \mg$ is a subspace of dimension $2g-1$. It is irreducible and closed in $\mg$. Its closure in $\Mg$ is the locus of stable hyperelliptic curves $\Hg$. The corresponding stack $\stHg$ is smooth. Therefore, we can identify the rational Picard groups of $\Hg$ and $\stHg$. The boundary is $\dhg:= \Hg\setminus \hg =\Hg\cap \dmg$ and we can look at the intersection of $\Hg$ with each (irreducible) component $\D_i$ of $\dmg$ independently. The components of the boundary $\dhg$ are the components of these intersections. 

For $i\geq 2$ a general curve in $\Hg\cap \D_i$ is obtained from smooth hyperelliptic curves $C_1$ and $C_2$ of genera $i$ and $g-i$ by
identifying a Weierstra\ss \ point on $C_1$ with a Weierstra\ss \ point on $C_2$.
When $i = 1$, we must take as $C_1$ a curve in $\M_{1,1}$ and attach it to $C_2$ at the marked point. When in addition $g=2$, $C_2$ must also be a curve in $\M_{1,1}$, attached to $C_1$ at the marked point. By the usual abuse of notation we denote $\Hg\cap \D_i$ as $\D_i$ and its class as $\d_i$.

The case $\Delta_{0}\cap \Hg$ is more complicated, because there are different types of nonseperating nodes: a node of type $\e_0$ is a self intersection of a single irreducible component, while a pair of nodes is called of type $\e_i$ if a partial normalisation at either of the nodes remains connected, but taking a partial normalisation at both nodes will decompose the curve into two connected components of genera $i$ and $g-i-1$. Each of these types of nodes corresponds to a different component of $\dhg$. We will denote the boundary component of curves with a node of type $\e_i$ by $\cE_i$ and its class by $\e_i$.

For $g>2$, a general curve in $\cE_0$ is obtained from a smooth
hyperelliptic curve $C$ of genus $g-1$ by identifying two points which are
conjugate under the hyperelliptic involution. For $g=2$ we must instead take $C\in \M_{1,2}$ and identify the two marked points.
A general curve in $\cE_i$ with $i>0$ is obtained from a smooth hyperelliptic curve $C_1$ of genus $i$, a smooth hyperelliptic curve $C_2$ of genus $g-i-1$, a pair $(p_1, q_1)$ of points on $C_1$, conjugate under the hyperelliptic involution of $C_1$, and a pair $(p_2, q_2)$ of points on $C_2$, conjugate under the hyperelliptic involution of $C_2$, by identifying $p_1$ with $p_2$ and $q_1$ with $q_2$. We leave the case $i=1$ or $g-i-1=1$ to the reader.

These are the irreducible components of $\dhg$. The moduli space $\Hg$ intersects each of the Divisors $\D_i\subset \Mg $ transversally for $i\geq 1$. The Divisor $\D_0\subset \Mg$ intersects $\Hg$ transversally in $\cE_0$, but with multiplicity  2 in $\cE_i$ for $i>0$. This is due to the fact that nodes of type $\e_i$ come in pairs. We therefore get the decomposition on $\Hg$
\begin{equation}\label{dirr Hg}
\dirr= \e_0+ 2\sum_{i\geq 1} \e_i
\end{equation}

With these preparations,
we recall \cite[Chapter XIII, Theorem 8.4]{acg}, :
\begin{theorem}
The rational Picard group $\mbox{Pic}(\Hg)\otimes \Q$ is freely generated by the classes $\d_i$ and $\e_i$. For the Hodge class $\l$ we have the relation
\begin{equation} \label{l Hg}
(8g+4)\l= g\e_0 +2\sum_{i=1}^{\lfloor\frac{g-1}{2}\rfloor} (i+1)(g-i)\e_i +4\sum_{i=1}^{\lfloor\frac{g}{2}\rfloor} i(g-i)\d_i.
\end{equation}
\end{theorem}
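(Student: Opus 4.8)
The plan is to use the isomorphism of coarse spaces $\Hg\simeq\Mrat/S_{2g+2}$ recorded above, under which $\Pic(\Hg)\otimes\Q$ is identified with the $S_{2g+2}$-invariant subspace of $\Pic(\Mrat)\otimes\Q$. By a theorem of Keel the latter is generated by the boundary classes $\d_{0,T}$, $T\subset\{1,\dots,2g+2\}$ with $2\le|T|\le g+1$, so the invariants are spanned by the $g$ symmetrizations $\Delta_k:=\sum_{|T|=k}\d_{0,T}$, $k=2,\dots,g+1$. First I would set up a dictionary between the $\Delta_k$ and the boundary classes of $\Hg$ by following the stable reduction of the double cover. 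If a subset $T$ of the $2g+2$ branch points bubbles off onto a rational tail $R_1$, the cover over $R_1$ is branched at the $|T|$ marked points, and the parity of $|T|$ decides whether the node of the base must itself be a branch point. For $|T|=2i+1$ the node is ramified, the two halves of the cover meet in a single Weierstra\ss\ point, and one gets a curve of type $\d_i$; for $|T|=2i+2$ the node is unramified, the halves meet in a conjugate pair of nodes, and one gets a curve of type $\e_i$, the value $|T|=2$ producing $\e_0$ after contracting the resulting unstable genus-$0$ bridge. This yields a bijection between $\{\Delta_2,\dots,\Delta_{g+1}\}$ and $\{\e_0,\e_1,\dots,\e_{\lfloor(g-1)/2\rfloor},\d_1,\dots,\d_{\lfloor g/2\rfloor}\}$.

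For the freeness claim I would argue that no invariant relation survives. Taking $S_{2g+2}$-invariants is exact over $\Q$, and Keel's relations are generated by the four-point cross-ratio relations among the $\d_{0,T}$; each such relation is a difference of two sums interchanged by the transposition swapping two of the four distinguished indices, so its $S_{2g+2}$-symmetrization vanishes identically. Hence the invariant part of the relation module is zero and the $g$ classes $\Delta_k$ — equivalently the $\e_i$ and $\d_i$ — form a basis. (Linear independence can also be checked directly by intersecting against $g$ one-parameter families with nondegenerate intersection matrix.)

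To obtain \eqref{l Hg} I would realise the universal hyperelliptic family explicitly as a double cover. Over $\Mrat$ (or a finite cover on which a square root exists) let $\rho\colon\mathcal X\to\Mrat$ be the universal pointed genus-$0$ curve with sections $\sigma_1,\dots,\sigma_{2g+2}$, put $\mathcal B=\sum_i\sigma_i$ and choose $\mathcal L$ with $\mathcal L^{\otimes2}=\mathcal O(\mathcal B)$; the double cover of $\mathcal X$ branched along $\mathcal B$ is the universal hyperelliptic curve, and its anti-invariant Hodge bundle is $\mathbb E=\rho_*(\omega_\rho\otimes\mathcal L)$, of rank $g$ with fibre $H^0(\mathbb{P}^1,\mathcal O(g-1))$, so $\l=c_1(\mathbb E)$. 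Applying Grothendieck--Riemann--Roch to $\rho_*(\omega_\rho\otimes\mathcal L)$, together with $\rho_*(\sigma_i^2)=-\psi_i$, $\rho_*(\sigma_i\sigma_j)=\d_{0,\{i,j\}}$ for $i\neq j$, and $\rho_*(c_1(\omega_\rho)\sigma_i)=\psi_i$, expresses $\l$ as an explicit combination of $\kappa_1$, the $\psi_i$ and the boundary classes of $\Mrat$. Rewriting $\kappa_1$ and $\sum_i\psi_i$ in terms of the $\d_{0,T}$ by the standard genus-$0$ relations leaves a purely boundary expression, necessarily $S_{2g+2}$-invariant; reading off the coefficient of each $\Delta_k$ and translating it through the dictionary above should reproduce \eqref{l Hg}, the overall denominator $8g+4=4(2g+1)$ arising on clearing $\deg\mathcal L=g+1$.

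I expect the main obstacle to lie precisely in this last step: controlling the Todd/node contribution of Grothendieck--Riemann--Roch over the boundary of $\Mrat$, and, above all, the ramification of the quotient map $p\colon\Mrat\to\Hg$ along the boundary. This is what separates the coefficient shapes: the even divisors $\Delta_{2i+2}$ produce the $\e_i$-coefficient $2(i+1)(g-i)$ and the odd divisors $\Delta_{2i+1}$ the $\d_i$-coefficient $4i(g-i)$, while $\e_0$ receives only $g$ — exactly half of the naive even value $2g$ — because an $\e_0$-curve carries a single node whereas an $\e_i$-curve with $i\ge1$ carries a conjugate pair of nodes, cf.\ \eqref{dirr Hg}. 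Pinning down these multiplicity-$2$ corrections, and thereby the precise constant $8g+4$, is the delicate point; by contrast, once the degeneration dictionary is in place the Picard-group statement is essentially formal.
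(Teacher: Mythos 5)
The paper does not actually prove this statement: it is quoted as Theorem 8.4 of \cite{acg}, Chapter XIII, so there is no in-paper proof to compare against. Judged on its own terms, your argument splits into two halves of very different completeness. The first half --- identifying $\Pic(\Hg)\otimes\Q$ with the $S_{2g+2}$-invariants of $\Pic(\Mrat)\otimes\Q$, invoking Keel's presentation by boundary classes and four-point relations, observing that each four-point relation is interchanged by a transposition of two of its four distinguished indices and hence killed by the averaging idempotent, and concluding that the $g$ symmetrized classes $\Delta_k$ form a basis --- is a correct and essentially complete proof of the freeness assertion, and your parity dictionary agrees with Proposition \ref{isomorphism} of the paper.

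The second half contains a genuine gap: the relation \eqref{l Hg} is never derived. You set up a Grothendieck--Riemann--Roch computation and then state that it ``should reproduce'' the formula, explicitly deferring the boundary contributions --- but those contributions are the entire content of the statement. Concretely, the construction of $\mathcal L$ with $\mathcal L^{\otimes 2}=\mathcal O(\mathcal B)$ fails fibrewise over every odd boundary divisor $\d_{0,2i+1}$: there $\mathcal O(\mathcal B)$ restricts to odd degree $2i+1$ on a component of the fibre and admits no square root, so one must pass to the admissible-cover model (additional branching at the node, or a blow-up), and it is precisely this modification that produces the local node/$\tor$ terms in GRR and hence the coefficients $g$, $2(i+1)(g-i)$ and $4i(g-i)$. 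On top of this, translating a boundary relation on $\Mrat/S_{2g+2}$ into one on $\Hg$ requires the intersection multiplicities $\phi^*(\d_{0,2i+1})=\tfrac12\d_i$, $\phi^*(\d_{0,2i+2})=\e_i$ and the special behaviour of $\e_0$ recorded in \eqref{dirr Hg}; you gesture at these but do not establish them. Until the node contributions and these multiplicities are actually computed, the displayed coefficients in \eqref{l Hg} are asserted rather than proved, so as it stands your text proves the first sentence of the theorem but only outlines a strategy for the second.
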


We recall that in \cite{acg} equation \eqref{l Hg} is proved on the level of stacks, but it is also valid on the level of coarse moduli spaces where we shall use it. In contradistinction, to calculate the canonical divisors, we shall carefully distinguish 
between stack and coarse moduli space.


As stated above, a smooth hyperelliptic curve $C$ admits a unique double cover $C\to\mathbb{P}^1$, the quotient by the hyperelliptic involution, with $2g+2$ simple branch points. In fact we can construct $C$ from these branch points. In other words, there is a canonical isomorphism between $\hg$, the moduli space of smooth hyperelliptic curves of genus $g$, and $\mrat$, the moduli space of rational $(2g+2)$-pointed curves modulo the symmetric group $S_{2g+2}$. We call $\mrat$ the moduli space of smooth $(2g+2)$-marked curves and denote its compactification by $\Mrat:=\M_{0,2g+2}/S_{2g+2}$. This isomorphism can be extended to an isomorphism $\Hg\simeq \Mrat$ (see \cite[Corollary 2.5]{al}). We will use this isomorphism to study the Picard group of $\Hg$ and calculate its canonical divisor.

Let us look at the boundaries of both moduli spaces: the boundary class $\dhg$ consist of the $g$ irreducible components $\e_i$ for $i=0,\ldots, \lfloor \frac{g-1}{2}\rfloor$ and $\d_i$ for $i=1, \ldots, \lfloor\frac{g}{2}\rfloor$. On $\M_{0,2g+2}$ two boundary components $\d_{0,S}$ and $\d_{0,T}$ will be identified by the action of the symmetric group $S_{2g+2}$ if and only if $S$ and $T$ have the same cardinality. Therefore (by the usual abuse of notation) we denote the boundary components on $\Mrat$ - corresponding to boundary divisors of $\Hg$ -  as $\D_{0,s}$ and their classes by $\d_{0,s}$ where $ s=2, \ldots, g+1$.

\begin{proposition}\label{isomorphism}
Under the canonical isomorphism $\phi: \Hg  \to  \Mrat$ the boundary components $\d_i$ on $\Hg$ will correspond to $\d_{0,2i+1}$ on $\Mrat$ and $\e_i$ will correspond to $\d_{0,2i+2}$, for all $i$, more precisely
\begin{equation}  \label{iso}
\phi^*(\d_0)=\dfrac{1}{2}\e_0, \qquad \phi^*(\d_{0,2i+2})= \e_i, \qquad  \phi^*(\d_{0,2i+1})= 2\d_i
\end{equation}
\end{proposition}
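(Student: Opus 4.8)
Since $\phi$ is induced by sending a (stable) hyperelliptic curve to its unordered set of $2g+2$ branch points, the plan is to analyze a generic one-parameter degeneration of the branch configuration and read off the corresponding degeneration of the double cover. First I would take a generic family in which exactly the $s=|S|$ branch points indexed by $S$ collide, writing it as $y^2=\prod_{j=1}^{2g+2}(x-\lambda_j)$ with $\lambda_j=t\mu_j$ for $j\in S$ (the $\mu_j$ pairwise distinct) and $\lambda_j$ constant for $j\notin S$; here $t$ is a local coordinate transverse to $\D_{0,S}\subset\Mrat$. Passing to the bubble chart $\xi=x/t$ and rescaling $y=t^{s/2}w$, the limiting cover over the bubble is $w^2=c_0\prod_{j\in S}(\xi-\mu_j)$, which by the Hurwitz formula has genus $\lfloor(s-1)/2\rfloor$ and is ramified over $\xi=\infty$ (equivalently, over the node of the base $\mathbb{P}^1$) exactly when $s$ is odd. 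Matching this against the descriptions of the boundary strata recalled in Section~2 identifies the stable limit: for $s=2i+2$ the node of the base is unbranched and the limit is a curve of type $\cE_i$ (two conjugate nodes joining components of genera $i$ and $g-i-1$), while for $s=2i+1$ the node is a Weierstra\ss\ point and the limit is of type $\D_i$ (a single separating node joining components of genera $i$ and $g-i$). This establishes $\phi^{-1}(\D_{0,2i+2})=\cE_i$ and $\phi^{-1}(\D_{0,2i+1})=\D_i$ set-theoretically, and reduces \eqref{iso} to the determination of multiplicities.

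The multiplicities come from the local behaviour of the double cover at the node, and the essential dichotomy is the parity already visible in the exponent $s/2$. For even $s$ the cover is unramified over the node of the base, the base-node smoothing parameter $t$ and the smoothing parameter of each of the two conjugate nodes of the cover agree to first order, and one obtains $\phi^*\d_{0,2i+2}=\e_i$ with no correction. For odd $s$ the cover is ramified over the node: locally the $2:1$ map is $a=c^2$ in suitable coordinates, so if $\tau$ denotes the smoothing parameter of the (Weierstra\ss) node of the cover and $t$ that of the node of the base, then $t=\tau^2$. I would combine this with the observation that the generic curve in $\D_i$ carries the ``partial'' hyperelliptic involution acting on $C_1$ alone, which fixes the Weierstra\ss\ node and hence acts on $\tau$ by $\tau\mapsto-\tau$; it is this extra automorphism, present along $\D_i$ but absent along $\cE_i$, that forces the factor and yields $\phi^*\d_{0,2i+1}=\tfrac12\d_i$.

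The hard part is precisely this last multiplicity computation: one must resolve the total space $y^2=c_0\prod_{j\in S}(x-t\mu_j)$ near the origin, identify the semistable model after the base change $t=\tau^2$ together with the local node equation $ab=\tau\cdot(\mathrm{unit})$ in it, and then track how the relation $t=\tau^2$ interacts with the action of the hyperelliptic involution on the node-smoothing parameter and with the stack-versus-coarse normalisation of the boundary classes, so that the factor emerges as exactly $\tfrac12$ for $\d_i$ and as $1$ for $\e_i$. I expect this bookkeeping, rather than the identification of the strata, to be the main obstacle; as a consistency check I would verify that substituting \eqref{iso} into the known relation \eqref{l Hg} for $\lambda$ reproduces the expected expression for the Hodge class in terms of the boundary classes $\d_{0,s}$ on $\Mrat/S_{2g+2}$.
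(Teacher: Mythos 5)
Your proposal follows essentially the same route as the paper: identify the strata by how the double cover behaves over the colliding branch points (ramified at the node for odd $s$, unramified for even $s$), and then trace the coefficients to the interplay of the base change $t=\tau^2$ with the extra involution along $\D_i$ and the stack-versus-coarse normalisation of the classes --- which is exactly the multiplicity computation the paper does not carry out either but imports from \cite{hm}, Chapter 6C. The one point to watch is $s=2$: there the genus-$0$ bubble is unstable and must be contracted, so the limit is a non-separating self-node of type $\e_0$ rather than a pair of conjugate nodes, and it is precisely this case that makes the coefficient of $\e_0$ in $\dirr=\e_0+2\sum_{i\ge 1}\e_i$ special (a subtlety the paper notes is mishandled even in \cite{hm}).
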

\begin{proof}
 For a general curve in each $\D_{0,s}\subset \Mrat$ we will construct a double cover, simply ramified in the marked points, possibly ramified in the nodes and unramified everywhere else.
 
 Let us begin with a general curve $C$ in $\D_{0,2i+1}$. The curve $C$ consists of a general $2i+1$-marked rational curve $C_1$ intersecting a general $2(g-i)+1$-marked curve rational $C_2$ in a general point. A double cover of a smooth rational curve must always be ramified in an even number of points. Therefore the cover of $C$ must consist of a double cover of $C_1$, ramified in the $2i+1$ marked points and the node, and a double cover of $C_2$ ramified in the $2(g-i)+1$ marked points and the node. This means that the preimage of $C$ under the double cover must consist of general hyperelliptic (or possibly elliptic) curves of genera $i$ and $g-i$ intersecting in Weierstra\ss \ points. Clearly this is an element of $\D_i$. 

Likewise, a general curve $C$ of $\D_{0,2i+2}$ consists of a general $2i+2$-marked rational curve $C_1$ intersecting a general $2(g-i)$-marked rational curve $C_2$ in a general point. A double cover of $C$ consists of  double covers of $C_1$ and $C_2$ ramified in the marked points, but not in the node, which will be general hyperelliptic (or possibly elliptic) curves of genera $i$ and $g-i-1$. These curves will intersect twice in the two conjugate points lying above the node of $C$. For $i>0$, this shows the correspondence with $\cE_i$. 
For $i=0$, the curve $C_1$ is rational and $C_2$ has genus $g-1$. We have to remember that a rational component meeting the rest of the curve in exactly two nodes violates stability and must be contracted. This causes a self intersection on the irreducible component of genus $g-1$. Thus $\D_{0,2}$ corresponds to $\cE_0$.

Now, take one-parameter family $F$ in $\Mrat$ intersecting the boundary divisor $\D_i$ once transversally in a general point. Then we have:
\begin{equation}
1=\mbox{deg}_F \ \d_i= \mbox{deg}_{\phi^*F}\ \phi^*(\d_i)
\end{equation}
From \cite[Ch. XIII, eq. 8.7]{acg} we recall that
\begin{equation}
\mbox{deg}_{\phi^*F}\ \phi^*(\d_0)=2\e_0, \qquad
\mbox{deg}_{\phi^*F}\ \phi^*(\d_{2i+2})=\e_i, \qquad
\mbox{deg}_{\phi^*F}\ \phi^*(\d_{2i+1})=\dfrac{1}{2} \d_i
\end{equation}
Note that \cite{acg} proves these equations on the stacks, but they also hold on the moduli spaces.
Comparing the two equations finishes the proof.
\end{proof}

We can now calculate the canonical divisor of $\Hg$ and $\stHg$. 
 \begin{theorem} \label{canonical Hg}
The canonical divisor of the coarse moduli space $\Hg$ is 
\begin{equation}  \label{candivcoarse}
K_{\Hg}=-(\frac{1}{2}+\frac{1}{2g+1})\e_0+ \sum_{i=1}^{\lfloor \frac{g-1}{2}\rfloor} 
(\frac{2(2i+2)(g-i)}{2g+1}-2)\e_i + \sum_{i=1}^{\lfloor \frac{g}{2}\rfloor} 
(\frac{2(2i+1)(2g-2i+1)}{2g+1}-4)\d_i,
\end{equation}
while 
the canonical divisor of the stack $\stHg$ is given by
\begin{equation}  \label{candivstack}
K_{\stHg}=-(\frac{1}{2}+\frac{1}{2g+1})\e_0+ \sum_{i=1}^{\lfloor \frac{g-1}{2}\rfloor} 
(\frac{2(2i+2)(g-i)}{2g+1}-2)\e_i + \sum_{i=1}^{\lfloor \frac{g}{2}\rfloor} 
(\frac{2(2i+1)(2g-2i+1)}{2g+1}-3)\d_i.
\end{equation}
\end{theorem}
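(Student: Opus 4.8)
The plan is to transport the computation to the smooth variety $\Mrat$ via the isomorphism $\phi$ of Proposition \ref{isomorphism}, compute the $S_{2g+2}$-invariant canonical class there, and then descend, treating the coarse space first and the stack afterwards. Writing $N=2g+2$, the first step records $K_{\Mrat}$ in boundary form. Since $\l=0$ in genus zero, the standard formula gives $K_{\Mrat}=\sum_{i=1}^{N}\psi_i-2\sum_{S}\d_{0,S}$, the second sum running over the boundary partitions $\{S,S^{c}\}$. Using the relation $\psi_i=\sum_{S\ni i,\ j,k\notin S}\d_{0,S}$ (valid for any fixed $j,k\ne i$), averaging over the auxiliary pair and using $\d_{0,S}=\d_{0,S^{c}}$, I would find that the coefficient of $\d_{0,S}$ with $|S|=s$ in $\sum_i\psi_i$ equals $\frac{s(N-s)}{N-1}$, so that with $c_s:=\frac{s(2g+2-s)}{2g+1}-2$,
\[
K_{\Mrat}=\sum_{S}\Big(\tfrac{s(2g+2-s)}{2g+1}-2\Big)\,\d_{0,S},\qquad s=|S|.
\]
This class is $S_N$-invariant and hence descends.

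The second step is the descent along the degree-$N!$ quotient $p:\Mrat\to\Mrat/S_{2g+2}\cong\Hg$, via the ramification formula $p^{*}K_{\Hg}=K_{\Mrat}-R_p$. I would first pin down $R_p$: a generic point of $\d_{0,S}$ has components carrying $|S|+1$ and $N-|S|+1$ special points, which are rigid once $|S|\ge 3$, so the $S_N$-stabilizer is trivial and $p$ is \'etale there; only for $|S|=2$ does the transposition $(ij)$, realized by the involution of the two-pointed bubble fixing the node, stabilize the divisor, giving ramification index $2$. Thus $R_p=D_2$, where $D_s:=\sum_{|S|=s}\d_{0,S}$. Combining this with Proposition \ref{isomorphism}, which yields $p^{*}\e_i=D_{2i+2}$ for $i\ge 1$, $p^{*}\e_0=2\,D_2$ (the same index-$2$ ramification visible in \eqref{dirr Hg}), and $p^{*}\d_i=2\,D_{2i+1}$ (the factor $2$ being the $\tfrac12$ of \eqref{iso}), I would match coefficients of each $D_s$. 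This forces the $\e_i$-coefficient to be $c_{2i+2}$, the $\d_i$-coefficient to be $\tfrac12 c_{2i+1}$, and the $\e_0$-coefficient to be $\tfrac12(c_2-1)$; inserting the explicit $c_s$ then reproduces \eqref{candivcoarse}.

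The third step passes from the coarse space to the stack by identifying which boundary divisors are reflection loci for the coarse map $\stHg\to\Hg$. The generic automorphism is the hyperelliptic involution, a trivial gerbe irrelevant to the canonical class; along $\d_i$, however, there is an extra involution $\iota_1$ acting as the hyperelliptic involution on the genus-$i$ component and trivially on the other. Since the node lies at a Weierstra\ss\ point, $\iota_1$ acts by $-1$ on the tangent line of the genus-$i$ branch and by $+1$ on the other, hence by $-1$ on the smoothing parameter: this is a genuine reflection. By contrast, along each $\e_i$ (and $\e_0$) the analogous extra involution interchanges the two conjugate nodes, so it is trivial on the diagonal smoothing direction normal to $\e_i$ inside $\Hg$, and the $\e$-coefficients must agree in \eqref{candivcoarse} and \eqref{candivstack}. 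Thus only the $\d_i$-coefficients change, and comparison of the two displays fixes the correction as $K_{\stHg}=K_{\Hg}+\sum_{i=1}^{\lfloor g/2\rfloor}\d_i$, turning $\frac{(2i+1)(2g-2i+1)}{4g+2}-1$ into $\frac{(2i+1)(2g-2i+1)}{4g+2}$ and yielding \eqref{candivstack}.

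I expect the main obstacle to be the bookkeeping of multiplicities: cleanly separating the two distinct sources of correction — the $S_N$-ramification of $p$ (the index-$2$ locus over $\e_0$ together with the factors $\tfrac12$ of Proposition \ref{isomorphism}, which produce the two superficially identical factors of $2$ in $p^{*}\e_0$ and $p^{*}\d_i$ for entirely different reasons) from the stacky hyperelliptic-involution correction along $\d_i$ — and in particular making the reflection-versus-swap dichotomy between $\d_i$ and $\e_i$ rigorous, so that the $\e$-coefficients come out identical in \eqref{candivcoarse} and \eqref{candivstack} while only the $\d_i$-coefficients are shifted by exactly $1$.
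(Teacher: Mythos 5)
Your argument is correct and follows the same overall strategy as the paper: transport the computation to $\Mrat/S_{2g+2}$ via Proposition \ref{isomorphism} to get the coarse class, then pass to the stack by adding the ramification divisor $\sum_i \d_i$ of $\stHg \to \Hg$, which you justify essentially as the paper does (along $\d_i$ the extra involution acts by $-1$ on the node-smoothing parameter because the node is a Weierstra{\ss} point, hence is a reflection, while along $\e_0$ and $\e_i$ the generic automorphism group is just the hyperelliptic gerbe and contributes no ramification). The one genuine difference is the first input: the paper simply cites Lemma 3.5 of \cite{km} for $K_{\Mrat/S_{2g+2}}$, whereas you re-derive it from $K_{\Mrat}=\sum_i\psi_i-2\sum_S\d_{0,S}$, the boundary expression of the $\psi_i$, and the observation that $p:\Mrat\to\Mrat/S_{2g+2}$ is ramified exactly along the $|S|=2$ boundary (the bubble involution being the only quasireflection in the stabilizer). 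This makes the proof self-contained and has the merit of explaining the otherwise opaque coefficient $-(\tfrac12+\tfrac1{2g+1})=\tfrac12(c_2-1)$ of $\e_0$ as the combined effect of $p$-ramification and the $R_p=D_2$ correction; your coefficient matching, including the two differently-sourced factors of $2$ in $p^*\e_0$ and $p^*\d_i$, reproduces \eqref{candivcoarse} and \eqref{candivstack} exactly. The only imprecision worth tightening is your phrase about an ``analogous extra involution'' along $\e_i$: generically there is no extra automorphism there at all (only the global hyperelliptic involution, which swaps the two conjugate nodes), which is the cleaner reason the $\e$-coefficients are unchanged between coarse space and stack.
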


\begin{proof}
We start with  the canonical divisor on the coarse moduli space $\Hg$,
using the canonical isomorphism of Proposition \ref{isomorphism}. This isomorphism does not exist on the level of stacks.
We recall the canonical divisor on the coarse moduli space $\Mrat$ from \cite[Lemma 3.5]{km}. (Note that a general element in $\delta_{0,2}$ has automorphism of order two. So in the notation of \cite{km} one has $\delta_{0,2}=\frac{1}{2}\widetilde{B}_2$.)
\begin{equation}  \label{candivrational}
K_{\Mrat}= -(\frac{1}{4}+\frac{1}{4g+2})\d_{0,2}+ \sum_{s=3}^{g+1} (\frac{s(2g+2-s)}{2g+1}-2)\d_{0,s}
\end{equation} 
Thus equation \eqref{candivcoarse} 
follows from Proposition \ref{isomorphism} by pullback. For computing the canonical divisor of the stack, we need the ramification divisor $R$ for the map
$$ \epsilon: \stHg \to \Hg.$$
The divisor $R$ can be read off the appropriate automorphism groups. First note that a generic element of $\Hg$ carries only the hyperelliptic involution as an automorphism. Likewise, a generic element of the boundary divisor $\e_i$ only carries the hyperelliptic involution, while the generic elements of $\d_i$ have automorphism group 
$\Z_2 \times \Z_2$ (the hyperelliptic involution acts independently on both components, since the node is a Weierstra{\ss}   point). It follows that the map $\epsilon$ is simply ramified over the boundary components $\d_i$, giving
$R=\sum \d_i.$ 
Thus equation \eqref{candivstack} follows from
$$ K_{\stHg}= \epsilon^*(K_{\Hg}) + R .$$
\end{proof}

\section{The locus of pointed hyperelliptic curves}\label{section Hgn}

In this section we shall study the moduli space $\Hgn$ of n-pointed stable hyperelliptic curves of genus $g$ and calculate its canonical class. Unlike in the case of $n=0$ the stack $\stHgn$ is not smooth. Therefore, the rational Picard groups of $\Hgn$ and $\stHgn$ may not be isomorphic. Recall, however, that we still have a natural isomorphism on codimension one cycles 
\begin{equation}
\label{appendix:eq1}
{\rm{CH}}^1\left(\overline{\rm{H}}_{g,n}\right)\otimes\Q\qq\cong{\rm{CH}}^1\left(\overline{\mathcal{H}}_{g,n}\right)\otimes\Q\qq.
\end{equation}
In particular, this will allow us to identify the classes, introduced in the rest of the section, on the stack and the coarse moduli space.

\ \\
We define $\hgn$ (and $\Hgn$) as the moduli spaces of (stable) hyperelliptic curves of genus $g$ together with $n$ distinct marked points (in the stable case nodes can not be marked.) Denoting the canonical projection by $\pi: \Mgn \to \Mg$, we get $\Hgn=\pi^{-1}(\Hg)$ and $\hgn=\pi^{-1}(\hg)\cap \mgn$. Both $\hgn$ and $\Hgn$ are irreducible of dimension $2g-1+n$.

The boundary of $\Hgn$ consist of the following irreducible components:
$\cE_{i,S}$ for $0\leq i\leq \lfloor\frac{g-1}{2}\rfloor$ and $ S\subset \{1,\ldots,n\} $, consisting of those curves in $\cE_i$ such that exactly the marked points labelled by $S$ are on the component of genus $i$;
$\D_{i,S}$ for $ 1\leq i\leq \lfloor\frac{g}{2}\rfloor$ and $S\subset \{1,\ldots,n\}$ consisting of curves in $\D_i$ such that exactly the marked points labelled by $S$ are on the component of genus $i$ and 
$\D_{0,S}:=\D_{0,S}\cap \Hgn$ for  $|S|\geq 2$, where by the usual abuse of notation we use $\D_{0,S}$ for both the divisor on $\Hgn$ and on $\Mgn$.

We denote the classes of these divisors by $\e_{i,S}$ and $\d_{i,S}$.
If $\i: \Hgn\to\Mgn$ is the inclusion, we denote the $\psi$-classes as $\psi_i:=\i^* \psi_i$.

It is known (see \cite{s}) that

\begin{theorem}
The rational Weil divisor class group $\Cl( \Hgn)\otimes\Q$ has a basis consisting of the $\psi$-classes and all the boundary classes.
\end{theorem}


We can now calculate the canonical classes of both the coarse moduli space $\Hgn$ and its assotiated stack $\stHgn$, i.e. prove Theorem \ref{canonical Hgn}

\begin{proof}
We begin on the level of stacks and consider the commutative diagram
\begin{center}
\begin{tikzpicture}
  \matrix (m) [matrix of math nodes,row sep=2em,column sep=4em,minimum width=2em]
  {\stHgn &  \stMgn   \\
   \stHgnm & \stMgnm        \\
   \vdots & \vdots  \\
   \stHg  &  \stMg \\         };

  \path[-stealth]
    (m-1-1) edge node [above] {$\i_n$} (m-1-2)
            edge node [right]  {$\hat{\pi}_n$} (m-2-1)
    (m-1-2) edge node [left] {$\pi_n$}(m-2-2)
    (m-2-1) edge node [above] {$\i_{n-1}$} (m-2-2)
            edge node [right]  {$\hat{\pi}_{n-1}$} (m-3-1)
    (m-2-2) edge node [left] {$\pi_{n-1}$}(m-3-2)
    (m-3-1) edge node [right]  {$\hat{\pi}_1$} (m-4-1)
    (m-3-2) edge node [left] {$\pi_1$}(m-4-2)
    (m-4-1) edge node [above] {$\i$} (m-4-2)
    (m-1-1) edge [bend right=45] node [left] {$\hat{\pi}$} (m-4-1)
    (m-1-2) edge [bend left=45] node [right] {$\pi$} (m-4-2);   
\end{tikzpicture}
\end{center}

First note that each of the squares in this diagram is Cartesian. This follows immediately from the fact that $\stHgn =\pi^{-1}( \stHg$) and a simple diagram chase.
Next we show that, for all $n\geq 1$, the forgetful map $\hat{\pi}_n$ is a universal family. Recall that the  universal family of a fine moduli space $\cM$ is a morphism $\cC\to \cM$ such that any family $\cX \to S$ in $\cM$ induces an isomorphism $\cX\simeq S\times_{\cM} \cC$, see e.g. \cite{hmo}. In \cite{acg} an analogous property is introduced for the stack of $\stMgn$ which has the properties of a fine moduli space. Recall further that the universal family of $\stMgnm$ is the forgetful map $\pi_n: \stMgn\to\stMgnm$. Now any family $\cX\to S$ in $\stHgnm$ is in particular a family in $\stMgnm$. Therefore we get
\begin{equation}
\begin{split}
\cX &\simeq S\times_{\stMgnm} \stMgn \simeq S\times_{\stMgnm} \stMgn \times_{\stHgnm} \stHgnm \\
&\simeq S\times_{\stHgnm} \stMgn \times_{\stMgnm} \stHgnm \simeq S\times_{\stHgnm} \stHgn.
\end{split}
\end{equation}

Next recall that in a universal family $\phi: \cC\to \cM$ the canonical divisor is given as
\begin{equation}\label{rel dual}
K_{\cC}=\phi^* K_{\cM} +\omega_\phi,
\end{equation}
where $\omega_\phi$ is the relative dualizing sheaf of $\phi$ (and in our particular case it is the sheaf of relative K\"ahler differentials $\Omega_\phi$.) By \cite[Chapter II, Proposition 8.10]{h} on the relative K\"ahler differentials of a fibre product we can calculate the relative dualizing sheave of the map $\hat{\pi}_n$ in the diagram above as
\begin{equation}\label{fib rel dual}
\omega_{\hat{\pi}_n}=\i_n^* \omega_{\pi_n}.
\end{equation}

In \cite{h} this identity is shown for schemes, and here we use it in its version for stacks.
Using the equations \eqref{fib rel dual} and \eqref{rel dual} for both $\stMgn$ and $\stHgn$ we can show by induction over $n$ that
\begin{equation}
\begin{split}
K_{\stHgn}=\hat{\pi}^* K_{\stHg}+ \i_n^* (K_{\stMgn}-\pi^*K_{\stMg})
		=\hat{\pi}^* K_{\stHg}+ \sum_{i=1}^n \psi_i -2\sum_{|S|\geq 2} \d_{0,S}.
\end{split}
\end{equation}

Equation \eqref{canonical Hgn eq stack} in Theorem \ref{canonical Hgn} now follows from Theorem \ref{canonical Hg} (giving the sum over $i$ on the right hand side of \eqref{canonical Hgn eq stack}) and repeated applications of \cite[Lemma 1.2 and 1.3]{ac} (giving the sum over $S$). 

In order to compute the canonical divisor on the coarse moduli space, we consider
the map
$$ \epsilon: \stHgn \to \Hgn$$
and note that
\begin{equation} \label{coarse}
\epsilon^*K_{\Hgn}= K_{\stHgn} -R ,
\end{equation} 
where $R$ is the ramification divisor of $\epsilon$. In order to compute $R$ (for $n \geq 1$) we consider the locus $\Sigma \subset \Hgn$ of pointed curves with a non-trivial automorphism. Then the codimension 1 components of $\Sigma$ are 
\begin{itemize}
\item $\{ (C,x) \in \overline{\cH}_{g,1}; x \mbox{  a Weierstra{\ss}  point} \}, \quad n=1$
\item  $\d_{i,\emptyset}, \qquad (i=1, \ldots, g  \mbox{  and  } n\geq1, g \geq 2).$
\end{itemize}

In each case a general element has automorphism group $\Z_2$:  In the first case this group is generated by the hyperelliptic involution of the curve $C$ which acts as an automorphism of the pointed curve $(C,x)$. In the second case, for $i>1$, the non-trivial automorphism is the hyperelliptic involution on the component of genus $i$, while for $i=1$ it is the involution with respect to the node on the elliptic tail.

Ignoring the first case (which is irrelevant  for our theorem) we find 
$R= \sum_{i=1}^g \d_{i,\emptyset}.$
Thus equation  \eqref{canonical Hgn eq coarse}  follows from equations \eqref{canonical Hgn eq stack} and \eqref{coarse}, completing the proof of the theorem.
\end{proof}

\section{Effective divisors}\label{section eff}

In this section we construct effective divisors on $\Hgn$ via pullback from $\Mgn$. For that purpose we recall the following standard result.

\begin{proposition}  \label{divisor}
Let $f:X \to Y$ be a morphism of projective schemes, $D \subset Y$ be an effective divisor and assume that $f(X)$ is not contained in $D$. Then $f^*(D)$ is an effective divisor on $X$.
\end{proposition}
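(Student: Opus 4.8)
The plan is to realize the effective divisor $D$ through its defining local data (equivalently, as the zero locus of a section of a line bundle) and then pull this data back along $f$; the entire content of the statement is that this pullback does not degenerate to the zero section. First I would cover $Y$ by affine opens $U_i=\mathrm{Spec}(B_i)$ on which $D$ is cut out by a single non-zero-divisor $g_i\in B_i$, which is possible precisely because $D$ is an effective Cartier divisor. Equivalently, $D$ is the vanishing locus of a global section $s$ of the associated line bundle $\cO_Y(D)$. Since the formation of a line bundle together with its defining section is functorial under pullback, over each $U_i$ the preimage $f^{-1}(U_i)$ is covered by affine opens $\mathrm{Spec}(A_{ij})$ on which $f^{*}(D)$ is defined by the pulled-back function $f^{*}g_i:=g_i\circ f\in A_{ij}$, and globally $f^{*}(D)$ is the vanishing locus of the pullback section $f^{*}s\in H^0(X,f^{*}\cO_Y(D))$.

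The decisive step, and the only place the hypothesis is used, is to check that $f^{*}s$ is not the zero section. This is immediate: the vanishing locus of $f^{*}s$ is the scheme-theoretic preimage $f^{-1}(D)$, so if $f^{*}s$ were identically zero we would have $f(X)\subseteq Z(s)=D$ set-theoretically, contradicting $f(X)\not\subset D$. Hence $f^{*}s\neq 0$, and $f^{*}(D)=f^{-1}(D)$ is a proper closed subscheme of $X$.

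The one genuine obstacle is to promote \emph{$f^{*}s$ is nonzero} to \emph{$f^{*}g_i$ is a non-zero-divisor in each $A_{ij}$}, since only the latter makes $f^{*}(D)$ an honest effective Cartier divisor of pure codimension one rather than merely a nonempty proper closed subscheme; this can fail if $X$ carries embedded or extra associated points lying over $D$. In all applications in this paper, however, $X$ is an irreducible variety (for instance $\Hgn$, which is irreducible of dimension $2g-1+n$), so each coordinate ring $A_{ij}$ is a domain and a nonzero element is automatically a non-zero-divisor. Thus $f^{*}g_i$ cuts out an effective divisor on every chart, and patching the local descriptions gives the effective Cartier divisor $f^{*}(D)$ asserted in the statement. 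For a general projective scheme the clean formulation restricts to integral $X$, where the argument just given is already complete; this is the only case needed below.
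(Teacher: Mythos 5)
The paper states this proposition as a ``standard result'' and gives no proof at all, so there is nothing to compare against; your argument is the standard one (realize $D$ as the zero locus of a section $s$ of $\cO_Y(D)$, pull back $s$, and observe that $f^*s\neq 0$ precisely because $f(X)\not\subset D$) and it is correct. You also rightly flag the one genuine caveat: for a general projective scheme the hypothesis $f(X)\not\subset D$ does not prevent an irreducible component or associated point of $X$ from landing inside $D$, in which case the pulled-back local equation is a zero-divisor and $f^*(D)$ is not Cartier; restricting to integral $X$ disposes of this, and that covers every use in the paper since $\Hgn$ is an irreducible (normal, hence integral) projective variety. No gap.
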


We begin by recalling from \cite{l} the effective divisors $\mathfrak{D}(g;a_1,\ldots, a_n)$ on $\mgn$ defined as the set of all pointed curves $(C,x_i,\ldots, x_n)$ carrying a $\mathfrak{g}^1_g$ through the divisor $\sum_{i=1}^n a_ix_i$. In particular $\mathfrak{D}(g;g)$ is the well known Weierstra\ss \ divisor. We shall now show that these divisors impose a condition on the marked points, not on the curves, and therefore define an effective divisor on $\hgn$.

\begin{proposition}
The hyperelliptic locus $\hgn$ is not contained in any $\mathfrak{D}(g;a_1,\ldots, a_n)$.
\end{proposition}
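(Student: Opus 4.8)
The plan is to reduce the statement to the failure of a single incidence and then exploit the rigidity of the canonical system on a hyperelliptic curve. Since here $\sum_{i=1}^n a_i = g$, a $\mathfrak{g}^1_g$ on $C$ has degree exactly $g$, so any divisor of the $\mathfrak{g}^1_g$ dominating $\sum_{i=1}^n a_i x_i$ must equal $\sum_{i=1}^n a_i x_i$; thus $(C,x_1,\ldots,x_n)\in\mathfrak{D}(g;a_1,\ldots,a_n)$ if and only if the complete linear system $|\sum_{i=1}^n a_i x_i|$ has dimension at least $1$, i.e. $h^0(C,\cO_C(\sum_{i=1}^n a_i x_i))\ge 2$. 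This is a closed condition, so by semicontinuity the closure of $\mathfrak{D}$ is still contained in $\{h^0\ge 2\}$; hence, using that $\hgn$ is irreducible, it suffices to produce one smooth pointed hyperelliptic curve with $h^0=1$. I would take $C$ a general hyperelliptic curve and $x_1,\ldots,x_n$ general points on it.

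By Riemann--Roch in degree $g$ on a genus $g$ curve, $h^0(\cO_C(\sum_{i=1}^n a_i x_i))=1+h^0(K_C-\sum_{i=1}^n a_i x_i)$, so the condition $h^0\ge 2$ is equivalent to $\sum_{i=1}^n a_i x_i$ being contained in some effective canonical divisor $\Theta$. The key structural input is that for the hyperelliptic cover $\pi\colon C\to\P^1$ the canonical map factors through $\pi$ (the canonical system being the pullback of $|\cO_{\P^1}(g-1)|$, exactly as in the description of invariant differentials used in the proof of Proposition \ref{sing smooth}); hence every effective canonical divisor is of the form $\Theta=\pi^*E$ with $\deg E=g-1$, and in particular $\Theta$ is invariant under the hyperelliptic involution $\g$. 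Concretely, for any point $q$ that is not a Weierstra\ss\ point one has $\mathrm{mult}_q(\pi^*E)=\mathrm{mult}_{\g(q)}(\pi^*E)$.

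Now choose the marked points general, so that no $x_i$ is a Weierstra\ss\ point and no two of them are conjugate under $\g$; then the $2n$ points $x_i,\g(x_i)$ are pairwise distinct. If $\sum_{i=1}^n a_i x_i\le\Theta=\pi^*E$, the $\g$-invariance just noted forces $\sum_{i=1}^n a_i\,\g(x_i)\le\Theta$ as well, whence $\Theta\ge\sum_{i=1}^n a_i\bigl(x_i+\g(x_i)\bigr)$. Comparing degrees gives $2g-2=\deg\Theta\ge 2\sum_{i=1}^n a_i=2g$, a contradiction. Therefore $h^0(\cO_C(\sum_{i=1}^n a_i x_i))=1$, this pointed curve lies outside $\mathfrak{D}(g;a_1,\ldots,a_n)$, and so $\hgn\not\subseteq\mathfrak{D}(g;a_1,\ldots,a_n)$.

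The geometric content sits entirely in the second paragraph: once one knows that canonical divisors on a hyperelliptic curve are $\g$-invariant pullbacks from $\P^1$, the conclusion is a one-line degree count, and this is the natural way to formalize the claim that $\mathfrak{D}(g;a_1,\ldots,a_n)$ imposes a condition only on the marked points and not on the curve. Accordingly, the only steps needing care are the bookkeeping reductions rather than any hard geometry: interpreting the incidence condition as $h^0\ge 2$, invoking closedness so that passing to the closure of $\mathfrak{D}$ changes nothing, and verifying that general marked points avoid Weierstra\ss\ points and conjugate pairs. I expect the main (mild) obstacle to be pinning down this first reduction precisely, in particular confirming the normalization $\sum_{i=1}^n a_i=g$ under which $\mathfrak{D}(g;a_1,\ldots,a_n)$ is the intended effective divisor.
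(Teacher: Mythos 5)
Your proof is correct and follows essentially the same route as the paper: reduce via Riemann--Roch to the condition that $\sum a_i x_i$ is contained in an effective canonical divisor, then use that every effective canonical divisor on a hyperelliptic curve is a $\gamma$-invariant pullback from $\mathbb{P}^1$ (a union of conjugate pairs) to rule this out for general marked points. Your degree-count formulation of the final contradiction is just a repackaging of the paper's observation that the marked points would have to contain a conjugate pair or a multiple Weierstra\ss{} point.
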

\begin{proof}
Take any pointed hyperelliptic curve $(C,x_1,\ldots, x_n)$ and set $D:=\sum a_i x_i$. Then $C$ will carry a $\mathfrak{g}^1_g$ through $D$ if and only if $h^0(D)\geq 2$. By Riemann-Roch this is equivalent to $h^0(K-D)\geq 1$. In other words, there must be an effective divisor $D'$ of degree $g-2$ such that $D+D'\sim K$. However, any effective canonical divisor on a hyperelliptic curve consists of $(g-1)$ pairs of points conjugate under the hyperelliptic involution. This means that there must be either indices $i\neq j$ with $x_i$ and $x_j$ conjugate or some $i$ with $a_i\geq 2$ and $x_i$ a Weierstra\ss \  point. Thus only curves with such special choice of marked points are contained in $\mathfrak{D}(g;a_1,\ldots, a_n)$ and not all of $\hgn$.
\end{proof}

In the following computations we shall use the shorthand
$$ \psi= \sum_{i=1}^n \psi_i, \qquad  \d_{i,s}= \sum_{|S|=s} \d_{i,S}. $$
We define $W_g$ as the class of the compactification of $\mathfrak{D}(g;1,\ldots, 1)$ on $\Mgg$ and recall from \cite[Theorem 5.4 and Theorem 5.5]{l}  the decomposition
\begin{equation}\label{Wg}
W_g = -\l + \psi -0\cdot\dirr -3\d_{0,2}-\frac{g(g+1)}{2}\d_{0,g} - \mbox{other terms},
\end{equation}
where "other terms" means a linear combination of the other boundary divisors with non-negative coefficients.

For a set $S\subset \{1,\ldots,n\}$ of cardinality $g$ we take $\pi_S: \Mgn \to \Mgg$ as the forgetful map forgetting all points not labelled by $S$. Now we construct an effective divisor $W$ on $\Mgn$ by summing over the pull-backs of $W_g$ along $\pi_S$ (and then rescaling this sum):

\begin{equation}\label{W}
W:=\binom{n-1}{g-1}^{-1} \sum_S \pi_S^* W_g = 
-\frac{n}{g}\l + \psi-0\cdot\dirr -\sum_{s\geq 2}b_{0,s}\d_{0,s}
-\mbox{higher order terms},
\end{equation}
with $b_{0,2}=2+\frac{g-1}{n-1}$, $b_{0,n}=\frac{n(g+1)}{2}$ and $b_{0,s}\geq b_{0,2}$ for all $s>2$. 

To calculate those coefficients note that we are summing over $\binom{n}{g}$ different pull-backs. Each of these contains the class $\l$ with coefficient $-1$ and the class $\d_{0,n}$ with coefficient $-\frac{g(g+1)}{2}$. A single pull-back  $\pi_S^* W_g$ contains the class $\psi_i$ with coefficient $1$, if $i\in S$, and with coefficient zero otherwise. Likewise, the coefficient of $\d_{0,\{i,j\}}$ is $-1$ for $i\in S, j\notin S$ (or the other way around). The coefficient is $-3$ for $i,j \in S$ and zero for $i,j\notin S$. Thus the coefficient of $\d_{0,2}$ in $\sum_S \pi_S^* W_g$ is $-2\binom{n-2}{g-1}-3\binom{n-2}{g-2}=-2\binom{n-1}{g-1}-\binom{n-2}{g-2}$.

By abuse of notation we write $W=\iota^*_n W$, where $\iota_n$ is the inclusion $\iota_n: \Hgn\to \Mgn$.

\section{Proof of Theorem \ref{t1}}\label{section proof}

In this section we study when the canonical divisor $K=K_{\Hgn}$ is effective or big, i.e. the sum of an ample and effective divisors. Note that, since $\Hgn$ is not $\Q$-factorial, the Kodaira-Iitaka dimension of a divisor may not be well-defined. But we can still determine that a divisor is effective or big by writing down explicit decompositions.

The divisor class $\psi=\sum_{i=1}^n \psi_i$ is big on $\Hgn$ because it is big on $\Mgn$. Thus we will try to write $K$ as a sum of a positive multiple of $\psi$ and effective divisors. We will use the divisor $W$ introduced in Section \ref{section eff} and write
\begin{equation}
K=\epsilon \psi +(1-\epsilon)W +E
\end{equation}
with 
\begin{equation}\label{E}
E=e_{0}\eta_{0}+\sum_{i=1}^{\lfloor\frac{g-1}{2}\rfloor}\sum_S e_{i,S}\eta_{i,S}+\sum_{i=0}^{\lfloor\frac{g}{2}\rfloor}\sum_S d_{i,S}\delta_{i,S}.
\end{equation}

We begin by taking the decomposition of the Hodge class from equation \eqref{l Hg} and pulling it back to $\Hgn$. 
\begin{equation}\label{l Hgn}
(8g+4)\l=g\e_0+2\sum_S\sum_{i\geq 1} (i+1)(g-i)\e_i +\sum_S \sum_{i\geq 1} i(g-i)\d_{i,S}.
\end{equation}

Likewise we pull back equation \eqref{dirr Hg} to get
\begin{equation} \label{dirr Hgn} 
\dirr= \e_0 +2\sum_S \sum_{i\geq 1} \e_i.
\end{equation}

Now, we combine \eqref{l Hgn} and \eqref{dirr Hgn} with \eqref{canonical Hgn eq coarse} and \eqref{W} to study the sign of each component $e_{0}, e_{i,S}$ and $d_{i,S}$ from equation \eqref{E} individually.

Clearly, the coefficients of $e_{i,S}$ and $d_{i,S}$ with $i\geq 1$ are all positive.
In fact, a short computation shows that the coefficients in the decomposition of $K$ - given in \eqref{canonical Hgn eq coarse} - are all positive and the coefficients of $W$ - given in \eqref{W} - are all negative. Thus we are subtracting a negative number from a positive one.

The coefficient $d_{0,S}$ with $|S|=s\leq n-1 $ is 
$$-2+(1-\epsilon) \cdot b_{0,s}\geq -2+(1-\epsilon)\cdot b_{0,2}= -2+(1-\epsilon)(2+\frac{g-1}{n-1})$$
which is positive for $\epsilon$ sufficiently small.

We consider separately the coefficient $d_{0,\{1, \ldots, n\}}=\d_{g,\emptyset}$ which is
$$ -3 + (1- \epsilon) b_{0,n} = -3 + (1- \epsilon) \frac{n(g+1)}{2} > 0, $$
for $g \geq 2$ and $n>g$.

The problematic case is $\e_0$ which only appears as part of $\l$ in $W$. Its coefficient is
$$ e_0=-(\dfrac{1}{2}+\dfrac{1}{2g+1})+(1-\epsilon)\frac{n}{g}\cdot \frac{g}{8g+4} $$
which vanishes for $n=4g+6$ and $\epsilon=0$. It is positive for $n\geq 4g+7$ and $\epsilon$ sufficiently small.

Thus $K$ is big for $n\geq 4g+7$ and still effective for $n=4g+6$.

\end{document}